%%%%%%%%%%%%%%%%%%%%%%% file template.tex %%%%%%%%%%%%%%%%%%%%%%%%%
%
% This is a general template file for the LaTeX package SVJour3
% for Springer journals.          Springer Heidelberg 2010/09/16
%
% Copy it to a new file with a new name and use it as the basis
% for your article. Delete % signs as needed.
%
% This template includes a few options for different layouts and
% content for various journals. Please consult a previous issue of
% your journal as needed.
%
%%%%%%%%%%%%%%%%%%%%%%%%%%%%%%%%%%%%%%%%%%%%%%%%%%%%%%%%%%%%%%%%%%%
%
% First comes an example EPS file -- just ignore it and
% proceed on the \documentclass line
% your LaTeX will extract the file if required
%\begin{filecontents*}{example.eps}
%!PS-Adobe-3.0 EPSF-3.0
%%BoundingBox: 19 19 221 221
%%CreationDate: Mon Sep 29 1997
%%Creator: programmed by hand (JK)
%%EndComments
% gsave
% newpath
%   20 20 moveto
%   20 220 lineto
%   220 220 lineto
%   220 20 lineto
% closepath
% 2 setlinewidth
% gsave
%   .4 setgray fill
% grestore
% stroke
% grestore
% \end{filecontents*}
%
\RequirePackage{fix-cm}
%
%\documentclass{svjour3}                     % onecolumn (standard format)
%\documentclass[smallcondensed]{svjour3}  

% onecolumn (ditto)
\let\oldvec\vec% Store \vec in \oldvec
\documentclass[smallextended]{svjour3}       % onecolumn (second format)
\let\vec\oldvec% Restore \vec from \oldvec
\smartqed  % flush right qed marks, e.g. at end of proof
\usepackage{graphicx}
\usepackage{amssymb,amsmath}
\usepackage{bbm}
\usepackage{graphicx,comment}
\usepackage{url}
\usepackage{algorithm,algorithmic}
\usepackage{hhline}
\usepackage{multirow,color}

\newcommand{\R}{\mathbb{R}}
\newcommand{\gf}{\nabla f}
\newcommand{\C}{\mathcal{C}}

\newtheorem{prop}{Proposition}

\graphicspath{{img/}}
%
% \usepackage{mathptmx}      % use Times fonts if available on your TeX system
%
% insert here the call for the packages your document requires
%\usepackage{latexsym}
% etc.
%
% please place your own definitions here and don't use \def but
% \newcommand{}{}
%
% Insert the name of "your journal" with
% \journalname{myjournal}
%
\begin{document}

\title{Stochastic Backward Euler: An Implicit Gradient Descent Algorithm for $k$-means Clustering %\thanks{Grants or other notes
%about the article that should go on the front page should be
%placed here. General acknowledgments should be placed at the end of the article.}
}
\subtitle{}

%\titlerunning{Short form of title}        % if too long for running head

\author{Penghang~Yin \and Minh~Pham \and
Adam~Oberman \and Stanley~Osher
}

%\authorrunning{Short form of author list} % if too long for running head

\institute{Penghang~Yin \and Minh~Pham \and Stanley~Osher \at
              Department of Mathematics, University of California, Los Angeles, Los Angeles, CA 90095. \\
            \email{(yph, minhrose, sjo)@math.ucla.edu}           %  \\
%             \emph{Present address:} of F. Author  %  if needed
           \and
           Adam Oberman \at
           Department of Mathematics and Statistics, McGill University, Montreal, Canada.\\
           \email{adam.oberman@mcgill.ca} 
 }

\date{Received: date / Accepted: date}
% The correct dates will be entered by the editor

\titlerunning{Stochastic Backward Euler For $k$-means}%

\maketitle

\begin{abstract}
In this paper, we propose an implicit gradient descent algorithm for the classic $k$-means problem. The implicit gradient step or backward Euler is solved via stochastic fixed-point iteration, in which we randomly sample a mini-batch gradient in every iteration. It is the average of the fixed-point trajectory that is carried over to the next gradient step. We draw connections between the proposed stochastic backward Euler and the recent entropy stochastic gradient descent (Entropy-SGD) for improving the training of deep neural networks. Numerical experiments on various synthetic and real datasets show that the proposed algorithm provides better clustering results compared to $k$-means algorithms in the sense that it decreased the objective function (the cluster) and is much more robust to initialization. 

\keywords{$k$-means \and backward Euler \and implicit gradient descent \and fixed-point iteration \and mini-batch gradient}
% \PACS{PACS code1 \and PACS code2 \and more}
% \subclass{MSC code1 \and MSC code2 \and more}
\end{abstract}

\section{Introduction}
The $k$-means method appeared in vector quantization in signal processing, and had now become popular for clustering analysis in data mining. In the seminal paper \cite{lloyd}, Lloyd proposed a two-step alternating algorithm  that quickly converges to a local minimum. Lloyd's algorithm is also known as an instance of the more general Expectation-Maximization (EM) algorithm applied to Gaussian mixtures. In \cite{Bottou_95}, Bottou and Bengio cast Lloyd's algorithm  as Newton's method, which explains its fast convergence.

Aiming to speed up Lloyd's algorithm, Elkan \cite{elkan} proposed to keep track of the distances between the computed centroids and data points, and then cleverly leverage the triangle inequality to eliminate unnecessary computations of the distances. Similar techniques can be found in \cite{yingyang}. It is worth noting that these algorithms do not improve the clustering quality of Lloyd's algorithm, but only achieve acceleration. However, there are well known examples where poor initialization can lead to low quality local minima for Lloyd's algorithm. Random initialization has been used to avoid these low quality fixed points.  The article \cite{kmeans++} introduced a smart initialization scheme such that the initial centroids are well-separated, which gives more robust clustering than random initialization. 

We are motivated by problems with very large data sets, where the cost of a single iteration of Lloyd's algorithm can be expensive.  Mini-batch \cite{mnbatch,stochastic_kmeans} was later introduced to adapt $k$-means for large scale data with high dimensions. The centroids are updated using a randomly selected mini-batch rather than all of the data. Mini-batch (stochastic) $k$-means has a flavor of stochastic gradient descent whose benefits are twofold. First, it dramatically reduces the per-iteration cost for updating the centroids and thus is able to handle big data efficiently. Second, similar to its successful application to deep learning \cite{deep_learning}, mini-batch gradient introduces noise in minimization and may help to bypass some bad local minima. Furthermore, the aforementioned Elkan's technique can be combined with mini-batch $k$-means for further acceleration \cite{nested_mnbatch}.

In this paper, we propose a backward Euler based algorithm for $k$-means clustering. Fixed-point iteration is performed to solve the implicit gradient step. As is done for stochastic mini-batch $k$-means, we compute the gradient only using a mini-batch of samples instead of the whole data, which enables us to handle massive data. Unlike the standard fixed-point iteration, the proposed stochastic fixed-point iteration outputs an average over its trajectory. Extensive experiments show that, with proper choice of step size for stochastic backward Euler, the proposed algorithm can improve over EM and Mini-batch EM and locate an improved minimum with decreased objective value. 

 In other words, while Lloyd's algorithm is effective with \emph{a full gradient oracle} we achieve better performance with the weaker \emph{mini-batch gradient oracle}.
We are motivated by  recent work by two of the authors \cite{deep_relax} which applied a similar algorithm to accelerate the training of Deep Neural Networks. 

\section{Stochastic backward Euler}
The celebrated proximal point algorithm (PPA) \cite{proximal_point} for minimizing some function $f(x)$ is:
\begin{equation}\label{prox}
x^{k+1} = \mathrm{prox}_{\gamma f} (x^k) := \arg\min_x f(x) + \frac{1}{2\gamma}\|x - x^k \|^2.
\end{equation}
PPA has the advantage of being monotonically decreasing, which is guaranteed for any step size $\gamma>0$. Indeed, by the definition of $x^{k+1}$ in (\ref{prox}), we have 
$$
f(x^{k+1})\leq f(x^k) - \frac{1}{2\gamma}\|x^{k+1} - x^k\|^2.
$$
When $\gamma\in[c,\frac{1}{L(\gf)})$ for any $c>0$ with $L(\gf)$ being the Lipschitz constant of $\gf$, the (subsequential) convergence to a stationary point is established in \cite{PPM}. If $f$ is differentiable at $x^{k+1}$, it is easy to check that the following optimality condition to (\ref{prox}) holds
$$
\gf(x^{k+1}) + \frac{1}{\gamma}(x^{k+1}-x^k) = 0.
$$
By rearranging the terms, we arrive at implicit gradient descent or the so-called backward Euler:
\begin{equation}\label{BE}
x^{k+1} = x^{k} - \gamma\nabla f(x^{k+1}).
\end{equation}
When $\gf$ has the Lipschitz constant $L(\gf)$ and $\gamma<\frac{1}{L(\gf)}$, the fixed point iteration 
\begin{equation}\label{fp}
y^{l+1} = x^k - \gamma \nabla f(y^l)
\end{equation}
is a viable option for updating $x^{k+1}$ by solving the equation.
$$
x = x^k - \gamma \nabla f(x).
$$
It is essentially the gradient descent 
$$
y^{l+1} = y^l - \tau \Big( \nabla f(y^l) + \frac{1}{\gamma}(y^l - x^k) \Big)
$$
on (\ref{prox}) by choosing the step size $\tau = \gamma$.

\begin{prop}
If $\gamma<\frac{1}{L(\nabla f)}$, then we have
\begin{itemize}
\item[(a)] $f(x) + \frac{1}{2\gamma}\|x - x^k \|^2$ is strongly convex, and the proximal problem (\ref{prox}) has a unique solution $y^*$.
\item[(b)] The fixed point iteration (\ref{fp})
generates a sequence $\{y^l\}$ converging to $y^*$ at least linearly.
\end{itemize}
\end{prop}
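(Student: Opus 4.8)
The plan is to attack the two parts with the two standard tools they invite: for (a) a monotonicity estimate that converts the $L(\gf)$-Lipschitz bound into strong monotonicity of the gradient of the proximal objective, and for (b) the Banach fixed-point theorem applied to the iteration map. The only genuinely delicate point is that $f$ itself is \emph{not} assumed convex, so every estimate must rest on the Lipschitz bound alone.

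For part (a), I would write $g(x) := f(x) + \frac{1}{2\gamma}\|x - x^k\|^2$, so that $\nabla g(x) = \gf(x) + \frac{1}{\gamma}(x - x^k)$. The key is that Lipschitz continuity by itself forces the one-sided bound $\langle \gf(x) - \gf(y),\, x - y\rangle \geq -L(\gf)\|x-y\|^2$, which is immediate from Cauchy--Schwarz. Adding the contribution $\frac{1}{\gamma}\|x-y\|^2$ coming from the quadratic term then yields
$$
\langle \nabla g(x) - \nabla g(y),\, x - y\rangle \geq \Big(\tfrac{1}{\gamma} - L(\gf)\Big)\|x - y\|^2 .
$$
Because $\gamma < 1/L(\gf)$, the constant $\mu := \frac{1}{\gamma} - L(\gf)$ is strictly positive, so $\nabla g$ is $\mu$-strongly monotone, equivalently $g$ is $\mu$-strongly convex. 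Strong convexity (hence coercivity) then delivers existence and uniqueness of the minimizer $y^*$ of (\ref{prox}).

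For part (b), I would regard the iteration (\ref{fp}) as $y^{l+1} = T(y^l)$ with $T(y) := x^k - \gamma\gf(y)$. The same Lipschitz bound gives at once
$$
\|T(y) - T(z)\| = \gamma\|\gf(y) - \gf(z)\| \leq \gamma L(\gf)\,\|y - z\|,
$$
and the standing hypothesis makes $\gamma L(\gf) < 1$, so $T$ is a contraction. The Banach fixed-point theorem then supplies a unique fixed point $\bar y$ together with the geometric estimate $\|y^l - \bar y\| \leq (\gamma L(\gf))^{\,l}\,\|y^0 - \bar y\|$, which is the claimed at-least-linear convergence. It remains only to identify $\bar y$ with $y^*$: any fixed point obeys $\bar y = x^k - \gamma\gf(\bar y)$, equivalently $\nabla g(\bar y) = 0$, and by the strong convexity of part (a) this stationary point is precisely the unique minimizer $y^*$.

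Both halves are largely routine once the right tool is named, and I do not expect a serious obstacle so much as a point requiring care: the whole argument hinges on the fact that the only curvature information available is the two-sided bound from Lipschitzness, and that the proximal penalty $\frac{1}{2\gamma}\|x - x^k\|^2$ with $\gamma < 1/L(\gf)$ is strong enough to dominate the worst-case negative curvature $-L(\gf)$ of a possibly nonconvex $f$. Keeping the estimates in gradient/monotonicity form, rather than invoking a Hessian that would demand extra smoothness, is what makes the proof clean and self-contained.
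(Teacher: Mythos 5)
Your proof is correct, and it is essentially the standard argument: the paper itself gives no proof of this proposition, deferring entirely to the citation of Bertsekas (Proposition 1.2.3), where the same two ingredients — strong monotonicity of $\nabla f(\cdot) + \frac{1}{\gamma}(\cdot - x^k)$ from the Lipschitz bound, and the Banach contraction principle for the iteration map $T(y) = x^k - \gamma \nabla f(y)$ — carry the result. Your write-up correctly handles the one genuinely delicate point, namely that $f$ is not assumed convex and the only available curvature control is the two-sided Lipschitz bound, which the proximal term dominates precisely when $\gamma < 1/L(\nabla f)$.
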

See \cite[Proposition 1.2.3]{Ber08}.

\bigskip

Let us consider $k$-means clustering for a set of data points $\{p_i\}_{i=1}^N$ in $\R^d$ with $K$ centroids $\{x_j\}_{j=1}^K$. Denoting $x = [x_1, \dots, x_K]^{\top}\in\R^{Kd}$, we seek to minimize
\begin{equation}\label{kmeans}
\min_{x\in\R^{Kd}} \; \phi(x): = \frac{1}{2N} \sum_{i=1}^N  \min_{1\leq j\leq K} \| x_j - p_i \|^2.
\end{equation}
Note that $\phi$ is non-differentiable at $x$ if there exist $p_i$ and $j_1 \neq j_2$ such that 
\begin{equation*}\label{non-diff}
j_1, j_2 \in \arg\min_{1\leq j\leq K} \|x_{j}-p_i\|^2.
\end{equation*}
This means that there is a data point $p_i$ which has two or more distinct nearest centroids $x_{j_1}$ and $x_{j_2}$. The same situation may happen in the assignment step of Lloyd's algorithm. 
In this case, we simply assign $p_i$ to one of the nearest centroids. With that said, $\phi$ is basically piecewise differentiable. By abuse of notation, we can define the 'gradient' of $\phi$ at any point $x$ by
\begin{equation}\label{hessian}
\nabla \phi(x) = \frac{1}{N}[\sum_{i\in\C_1} (x_1-p_i), \dots, \sum_{i\in\C_K} (x_K-p_i)]^{\top},
\end{equation}
where $\C_j$ denotes the index set of the points that are assigned to the centroid $x_j$. From now on and for the rest of the paper, we denote the piecewise gradient by $\nabla \phi$ as stated in (\ref{non-diff}), and none of the results depends on the specific assignment of ambiguous data points $p_i$. Similarly, we can compute the 'Hessian' of $\phi$ as was done in \cite{Bottou_95}:
$$
\nabla^2 \phi(x) = \frac{1}{N}\mathrm{Diag}\big(|\C_1|\mathbf{1}_{(|\C_1|)}, \dots, |\C_K|\mathbf{1}_{(|\C_K|)}\big),
$$
where $\mathbf{1}_{(n)}$ is an $n$-D vector of all ones. In what follows, we analyze how the fixed point iteration (\ref{fp}) works on the piecewise differentiable $\phi$ with discontinuous $\nabla \phi$. 

\begin{definition}
$g$ is piecewise Lipschitz continuous on $\Omega$ with Lipschitz constant $L$, if $\Omega$ can be partitioned into a finite number of sub-domains $\Omega_I$ satisfying $\cup_{I} \Omega_I = \mathbb{R}^{Kd}$, $\Omega_I \cap \Omega_J = \emptyset $, $\forall \, I\neq J$, 
and $g$ is Lipschitz continuous in each sub-domain $\Omega_I$, i.e., for each $\Omega_I$ we have
$$\|g(x) - g(y)\| \le L \|x-y\| \quad \forall x,y \in \Omega_I$$
\end{definition}
We can see that $\nabla \phi$ is at most piecewise $\frac{1}{K}$-Lipschitz continuous. The following result proves the convergence of fixed point iteration on $k$-means problem.

\begin{theorem}\label{thm:fp}
Let $\phi$ be the $k$-means objective function defined in (\ref{kmeans}). Suppose $\nabla \phi$ is piecewise $L$-Lipschitz. If $\gamma < 1/L$, then the fixed point iteration for minimizing $h(x):=\phi(x) +\frac{1}{2\gamma}\|x-x^k\|^2$ given by
$$
y^{l+1} = x^k - \gamma \nabla \phi(y^l)
$$
with the initialization $y^0 = x^k$ satisfies
\begin{itemize}
\item[(a)] $h(y^{l+1})\leq h(y^l) - (\frac{1}{2\gamma} - \frac{L}{2})\|y^{l+1} - y^l\|^2$ and $\|y^{l+1} - y^{l}\|\to 0$ as $l\to \infty$.
\item[(b)] $\{y^l\}$ is bounded. Moreover, if any limit point $y^*$ of a convergent subsequence of $\{y^l\}$ lies in the interior of some sub-domain, then the whole sequence $\{y^l\}$ converges to $y^*$ with a locally linear rate, which is a fixed point obeying 
$$
y^* = x^k -\gamma\nabla \phi(y^*).
$$
\end{itemize}
\end{theorem}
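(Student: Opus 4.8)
The plan is to recognize the fixed point iteration as plain gradient descent on $h$ with step size $\gamma$, and then to run a descent-lemma argument that is robust to the discontinuity of $\nabla\phi$. First I would note that on each sub-domain $\Omega_I$ the cluster assignment is frozen, so $\phi$ coincides there with a genuine convex quadratic $\phi_I(x) = \frac{1}{2N}\sum_i \|x_{j_I(i)} - p_i\|^2$ whose gradient is globally $L$-Lipschitz, and that $\phi(x) = \min_I \phi_I(x)$ globally, i.e.\ $\phi \le \phi_I$ everywhere with equality on $\Omega_I$. Since $\nabla h(y^l) = \nabla\phi(y^l) + \frac{1}{\gamma}(y^l - x^k)$, a direct computation shows $y^{l+1} = y^l - \gamma\nabla h(y^l)$, so the scheme is exactly gradient descent on $h$.

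For part (a), I would fix the index $I$ with $y^l \in \Omega_I$ and apply the standard smooth descent lemma to the quadratic $\phi_I$ between $y^l$ and $y^{l+1}$; because $\phi(y^{l+1}) \le \phi_I(y^{l+1})$ while $\phi(y^l) = \phi_I(y^l)$ and $\nabla\phi(y^l) = \nabla\phi_I(y^l)$, this yields the one-sided estimate $\phi(y^{l+1}) \le \phi(y^l) + \langle\nabla\phi(y^l),\, y^{l+1}-y^l\rangle + \frac{L}{2}\|y^{l+1}-y^l\|^2$ for the discontinuous gradient. Adding the \emph{exact} quadratic expansion of $\frac{1}{2\gamma}\|\cdot - x^k\|^2$ and substituting $\nabla h(y^l) = -\frac{1}{\gamma}(y^{l+1}-y^l)$ collapses the first-order term and produces precisely the stated inequality $h(y^{l+1}) \le h(y^l) - (\frac{1}{2\gamma} - \frac{L}{2})\|y^{l+1}-y^l\|^2$. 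Since $\gamma < 1/L$ the coefficient is positive, so $h(y^l)$ is nonincreasing; as $h \ge 0$ it converges, and telescoping the estimate forces $\sum_l \|y^{l+1}-y^l\|^2 < \infty$, hence $\|y^{l+1}-y^l\| \to 0$.

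For part (b), boundedness follows from $\frac{1}{2\gamma}\|y^l - x^k\|^2 \le h(y^l) \le h(y^0)$, so Bolzano--Weierstrass gives a subsequence $y^{l_m} \to y^*$. If $y^*$ lies in $\mathrm{int}(\Omega_I)$, then $\nabla\phi$ is continuous (indeed equal to the quadratic gradient $\nabla\phi_I$) on a ball $B(y^*, r) \subseteq \Omega_I$; passing to the limit in $y^{l_m+1} = x^k - \gamma\nabla\phi(y^{l_m})$ and using $\|y^{l_m+1} - y^{l_m}\|\to 0$ identifies $y^*$ as a fixed point. To upgrade subsequential to full linear convergence I would show the iteration becomes a contraction once it is near $y^*$: for $y^l \in B(y^*, r)$ we have $\|y^{l+1} - y^*\| = \gamma\|\nabla\phi_I(y^l) - \nabla\phi_I(y^*)\| \le \gamma L \|y^l - y^*\|$, and because $\gamma L < 1$ this both keeps $y^{l+1}$ inside $B(y^*, r)$ and contracts the distance; choosing $l_m$ large enough that $y^{l_m} \in B(y^*, r)$ and inducting then yields $\|y^l - y^*\| \le (\gamma L)^{\,l - l_m}\|y^{l_m} - y^*\|$ for all $l \ge l_m$.

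The main obstacle is that $\nabla\phi$ jumps across the sub-domain boundaries, so the usual descent lemma does not apply directly; the crux of the whole argument is the observation that $\phi$ is a pointwise minimum of the smooth quadratics $\phi_I$, which converts $\phi \le \phi_I$ into a valid one-sided descent estimate at every step. A secondary delicacy in (b) is ensuring the iterates do not leave $\Omega_I$ once they approach $y^*$; this is exactly where the strict bound $\gamma < 1/L$ (equivalently $\gamma L < 1$) is used, guaranteeing the local map is a genuine contraction that traps the tail of the sequence inside the interior of a single sub-domain.
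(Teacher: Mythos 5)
Your proposal is correct and follows essentially the same route as the paper: both prove (a) by applying the descent lemma to the quadratic piece $\phi_I$ active at $y^l$ and using the one-sided bound $\phi\le\phi_I$ with equality at $y^l$, and both prove (b) via coercivity of $h$ plus the local contraction $\|y^{l+1}-y^*\|\le\gamma L\|y^l-y^*\|$ inside the sub-domain containing $y^*$. The only cosmetic differences are that you telescope the descent inequality rather than invoking convergence of $\{h(y^l)\}$ directly, and you make the trapping ball $B(y^*,r)$ explicit.
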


\begin{proof}
(a) We know that $\phi$ is piecewise quadratic. Suppose $y^l\in\Omega_I$ (note that $y^l$ could be on the boundary), then $\phi$ has a uniform expression restricted on $\Omega_I$ which is a quadratic function, denoted by $\phi_{\Omega_I}$. We can extend the domain of $\phi_{\Omega_I}$ from $\Omega_I$ to the whole $\R^{Kd}$, and we denote the extended function still by $\phi_{\Omega_I}$. Since $\phi_{\Omega_I}$ is quadratic, $\nabla\phi_{\Omega_I}$ is $L$-Lipschitz continuous on $\R^{Kd}$. Then we have the following well-known inequality
\begin{align*}
\phi_{\Omega_I}(y^{l+1})& \leq  \phi_{\Omega_I}(y^l) + \langle \nabla\phi_{\Omega_I}(y^l),y^{l+1} - y^l \rangle + \frac{L}{2}\|y^{l+1}-y^l\|^2 \\
&  = \phi(y^l) + \langle \nabla\phi(y^l),y^{l+1} - y^l \rangle + \frac{L}{2}\|y^{l+1}-y^l\|^2.
\end{align*}
Using the above inequality and the definition of $\phi$, we have
\begin{align*}
h(y^{l+1}) & = \phi(y^{l+1}) + \frac{1}{2\gamma}\|y^{l+1}-x^k\|^2 \leq \phi_{\Omega_I}(y^{l+1}) + \frac{1}{2\gamma}\|y^{l+1}-x^k\|^2\\
& \leq \phi(y^l) + \langle \nabla\phi(y^l),y^{l+1} - y^l \rangle + \frac{L}{2}\|y^{l+1}-y^l\|^2 + \frac{1}{2\gamma}\|y^{l+1}-x^k\|^2 \\
& = \phi(y^l) + \langle \nabla\phi(y^l),y^{l+1} - y^l \rangle + (\frac{L}{2}-\frac{1}{2\gamma})\|y^{l+1}-y^l\|^2 \\
& \qquad \qquad \qquad \qquad \qquad + \frac{1}{2\gamma}\|y^l - x^k\|^2 + \frac{1}{\gamma}\langle y^{l+1}-x^k, y^{l+1} - y^{l}\rangle \\
& = h(y^l) -(\frac{1}{2\gamma}-\frac{L}{2})\|y^{l+1}-y^l\|^2 + \langle\frac{1}{\gamma}(y^{l+1} -x^k)+\nabla \phi(y^l), y^{l+1} -y^l \rangle \\
& = h(y^l) -(\frac{1}{2\gamma}-\frac{L}{2})\|y^{l+1}-y^l\|^2.
\end{align*}
In the second equality above, we used the identity
$$
\frac{1}{2}\|a-b\|^2 + \langle a,b \rangle = \frac{1}{2}\|a\|^2 + \frac{1}{2}\|b\|^2
$$
with $a = y^{l+1}-y^l$ and $b=y^{l+1} - x^k$. Since $\gamma<\frac{1}{L}$, $\{h(y^l)\}$ is monotonically decreasing. Moreover, since $h$ is bounded from below by 0, $\{h(y^l)\}$ converges and thus $\|y^{l+1}-y^l\|\to 0$ as $l\to\infty$.

\medskip

(b) Since $h(y)\to \infty$ as $y\to\infty$, combining with the fact that $h(y^l)\leq h(y^{l+1})$, we have $\{y^l\}\subseteq \{y\in\R^{Kd}: h(y)\leq h(y^0)\}$ is bounded. Consider a convergent subsequence $\{y^{l_m}\}$ whose limit $y^*$ lies in the interior of some sub-domain. Then for sufficiently large $l_m$, $\{y^{l_m}\}$ will always remain in the same sub-domain in which $y^*$ lies and thus $\lim_{l_m\to \infty}\nabla\phi(y^{l_m}) = \nabla\phi(y^*)$. Since by (a), $\|y^{l+1}-y^{l}\|\to 0$, we have $\|\nabla\phi(y^{l+1})-\nabla\phi(y^{l})\| = \frac{1}{\gamma}\|y^{l}-y^{l-1}\|\to 0$ as $l\to\infty$. Therefore,
\begin{align*}
0 & = \lim_{l_m\to\infty} y^{l_m} - x^k + \gamma \nabla\phi(y^{l_m-1}) = \lim_{l_m\to\infty} y^{l_m} - x^k + \gamma \nabla\phi(y^{l_m}) \\
& = y^* - x^k + \gamma\phi\nabla(y^*),
\end{align*}
which implies $y^*$ is a fixed point. Furthermore, by the piecewise Lipschitz condition,
$$
\|y^{l_m+1}-y^*\| = \gamma\|\nabla \phi(y^{l_m}) - \nabla \phi(y^*)\|\leq L\gamma\|y^{l_m}-y^*\|.
$$
Since $L\gamma<1$, when $l_m$ is sufficiently large, $y^{l_m+1}$ is also in the same sub-domain containing $y^*$. By repeatedly applying the above inequality for $l > l_m$, we conclude that $\{y^l\}$ converges to $y^*$.

\end{proof}

\begin{remark}
This result can be extended to objective functions that are the pointwise infimum of a set of a finite number of  Lipschitz differentiable functions.
\end{remark}
\subsection{Algorithm description}

Instead of using the full gradient $\nabla\phi$ in fixed-point iteration, we adopt a randomly sampled mini-batch gradient 
$$
\nabla_l \phi = \frac{1}{M}[\sum_{i\in\C_1^l} (x_1-p_i), \dots, \sum_{i\in\C_K^l} (x_K-p_i)]^{\top}
$$ 
at the $l$-th inner iteration. Here, $\C_j^l$ denotes the index set of the points in the $l$-th mini-batch associated with the centroid $x_j$ obeying $\sum_{j=1}^K |\C_j^l| = M$. The fixed-point iteration outputs a forward looking average over its trajectory. Intuitively averaging greatly stabilizes the noisy mini-batch gradients and thus smooths the descent. We summarize the proposed algorithm in Algorithm \ref{alg}. Another key ingredient of our algorithm is an aggressive initial step size $\gamma^0\approx K$, which helps bypass bad local minimum at the early stage. Unlike in deterministic backward Euler, diminishing step size is needed to ensure convergence. But $\gamma$ should decay slowly because large step size is good for a global search.

\begin{algorithm}
\caption{Stochastic backward Euler for $k$-means.}
\label{alg}
\textbf{Input}: number of clusters $K$, step size $\gamma^0\approx K$, mini-batch size $M$, averaging parameter $\alpha>0$, step size decay parameter $\beta\lessapprox 1$.\\
\textbf{Initialize}: centroid $x^0$.
\begin{algorithmic}
\FOR {$k = 1,\dots,\mathrm{omaxit}$}
\STATE $y^{0,k} = x^{k-1}$
\STATE $x^k = y^{0, k}$
\FOR {$l = 1,\dots, \mathrm{imaxit}$}
\STATE Randomly sample a mini-batch gradient $\nabla_l \phi$. 
\STATE $y^{l,k} = x^{k-1} - \gamma^k\nabla_l \phi(y^{l-1, k})$
\STATE $x^{k} = \alpha x^{k} + (1-\alpha)y^{l, k}$
\ENDFOR
\STATE $\gamma^{k} = \beta\gamma^{k-1}$
\ENDFOR
\end{algorithmic}
\textbf{Output}: $x^{\mathrm{omaxit}}$
\end{algorithm}

%\textcolor{red}{}
%\begin{theorem}
%Under the assumptions of Theorem \ref{thm:fp}, 
%\end{theorem}
%
%\begin{proof}
%Using the arguments in the proof of Theorem \ref{thm:fp}(a), we obtain
%\begin{align*}
%h(y^{l+1, k}) & \leq h(y^l) -(\frac{1}{2\gamma}-\frac{L}{2})\|y^{l+1, k}-y^l\|^2 + \langle\frac{1}{\gamma}(y^{l+1} -x^k)+\nabla \phi(y^l), y^{l+1} -y^l \rangle \\
%& = h(y^l) -(\frac{1}{2\gamma}-\frac{L}{2})\|y^{l+1}-y^l\|^2 + \langle \nabla_l \phi(y^l) - \nabla \phi(y^l), y^{l+1} - y^l\rangle.
%\end{align*}
%Taking the expectation, $\mathbb{E}[h(y^{l+1})]\leq \mathbb{E}[h(y^{l})] +  (\frac{1}{2\gamma}-\frac{L}{2})\mathbb{E}\|y^{l+1}-y^l\|^2$
%\end{proof}

\subsection{Related work}
Chaudhari et al. \cite{esgd} recently proposed the entropy stochastic gradient descent (Entropy-SGD) algorithm to tackle the training of deep neural networks. Relaxation techniques arising in statistical physics were used to change the energy landscape of the original non-convex objective function $f(x)$ yet with the minimizers being preserved, which allows easier minimization to obtain a 'good' minimizer with a better geometry. More precisely, they suggest to replace $f(x)$ with a modified objective function $f_{\gamma}(x)$ called local entropy \cite{local_entropy} as follows
$$f_{\gamma}(x): = -\frac{1}{\beta} \log \big( G_{\beta^{-1}\gamma} * \exp ( - \beta f(x) ) \big),
$$
where $G_{\gamma} (x) = (2\pi \gamma)^{-d/2} \exp \big( - \frac{|x|^2}{2\gamma} \big)$ is the heat kernel. The connection between Entropy-SGD and nonlinear partial differential equations (PDEs) was later established in \cite{deep_relax}. The local entropy function $f_{\gamma}$ turns out to be the solution to the following viscous Hamilton-Jacobi (HJ) PDE
at $t=\gamma$
\begin{equation} \label{eqn1}
u_t = -\frac{1}{2} |\nabla u|^2 + \frac{\beta^{-1}}{2} \Delta u
\end{equation}
with the initial condition $u(x,0) = f(x)$. In the limit $\beta^{-1} \rightarrow 0$, (\ref{eqn1}) reduces to the non-viscous HJ equation
$$u_t = -\frac{1}{2} | \nabla u |^2,$$
whose viscosity solution is exactly the Moreau envelope \cite{Moreau_65}:
%closely related to the proximal operator $\mathrm{prox}_{tf}(x)$:
$$
u(x,t) = \inf_y \Big\{f(y) + \frac{1}{2t}\|y-x\|^2 \Big\}.
$$
The gradient descent dynamics for $f_{\gamma}$ is obtained by taking the limit of the following system of stochastic differential equation as the homogenization parameter $\varepsilon\to 0$:
\begin{align}  \label{eqn2}
	dx(s) &= -\gamma^{-1} (x-y) ds \nonumber \\
    dy(s) &= - \frac{1}{\varepsilon} \big[ \nabla f(y) + \frac{y-x}{\gamma} \big] ds + \frac{\beta^{-1/2}}{\sqrt{\varepsilon}} dW(s)
\end{align}
where $W(s)$ is the standard Wiener process. Specifically, we have 
$$
-\nabla f_{\gamma} (x) = - \gamma^{-1} ( x- \langle y \rangle)
$$ 
with $\langle y \rangle = \lim_{T \rightarrow \infty} \frac{1}{T} \int_0^T y(s) ds$ and $y(s)$ being the solution of (\ref{eqn2}) for fixed $x$. This gives rise to the implementation of Entropy-SGD \cite{deep_relax}:
\begin{align*}
y^{l+1, k} = & \,  y^{l, k} - \eta_y\Big (\nabla_l f(y^{l, k})+ \frac{y^{l,k} - x^k}{\gamma^k} \Big) + \sqrt{\eta_y\beta^{-1}} \varepsilon \quad (\mbox{inner loop})\\
x^{k+1} = & \, x^k - \eta_x\frac{x^k - \langle y \rangle^k}{\gamma^k} \quad (\mbox{outer loop})
\end{align*}
where $\eta_y$ and $\eta_x$ are the gradient step sizes for the inner and outer loops, respectively,  $\langle y \rangle^k$ is the moving average of $\{y^{l,k}\}$ output from the inner loop, and  $\sqrt{\eta_y\beta^{-1}} \varepsilon$ introduces the noise. Stochastic backward Euler simplifies Entropy-SGD in two aspects. First, the term $ \sqrt{\eta_y\beta^{-1}} \varepsilon$ is absent in SBE as the mini-batch gradient $\nabla_l f$ itself already contains the noise. Second, the step sizes $\eta_y$ and $\eta_x$ are both set to $\gamma^k$, which make the algorithm simpler with less tunable parameters.

\section{Experimental results}
We show by several experiments that the proposed stochastic backward Euler (SBE) gives superior clustering results compared with the state-of-the-art algorithms for $k$-means. SBE scales well for large problems. In practice, only a small number of fixed-point iterations are needed in the inner loop, and this seems not to depend on the size of the problem. Specifically, we chose the parameters \verb"imaxit" = 5 or $10$ and the averaging parameter $\alpha = 0.75$ in all experiments. We remark that SBE is not very sensitive to these parameters. For example, it works equally well for $\alpha = 0.9$. In addition, we always set $\gamma^0 = K$.

\subsection{2-D synthetic Gaussian data}
We generated 4000 synthetic data points in 2-D plane by multivariate normal distributions with 1000 points in each cluster. The means and covariance matrices used for Gaussian distributions are as follows:
\begin{align*}
& \mu_1 = \begin{bmatrix} -5 \\ -3 \end{bmatrix}, \;
\mu_2 = \begin{bmatrix} 5 \\-3 \end{bmatrix}, \;
\mu_3 = \begin{bmatrix} 0.0 \\ 5.0 \end{bmatrix}, \;
\mu_4 = \begin{bmatrix} 2.5 \\ 4.0 \end{bmatrix}; \\
& \Sigma_1 = \begin{bmatrix}0.8 & 0.1\\ 0.1 & 0.8 \end{bmatrix}, \; 
\Sigma_2 = \begin{bmatrix} 1.2 & 0.6\\ 0.6 & 0.7 \end{bmatrix}, \;
\Sigma_3 = \begin{bmatrix} 0.5 & 0.05\\ 0.05 & 1.6 \end{bmatrix}, \;
\Sigma_4 = \begin{bmatrix} 1.5 & 0.05\\ 0.05 & 0.6 \end{bmatrix}.
\end{align*}
For the initial centroids given below,both Lloyd's algorithm (or EM) and mini-batch EM got stuck at the same local minimum with objective value about $1.34$; see the left plot of Fig. \ref{fig1}. 
\begin{align*}
x_1 = \begin{bmatrix} -5.5989 \\ -2.7090 \end{bmatrix}, \;
x_2 = \begin{bmatrix} -4.4572 \\ -4.0614 \end{bmatrix}, \;
x_3 = \begin{bmatrix} -0.1082 \\ 5.2889 \end{bmatrix}, \;
x_4 = \begin{bmatrix} 2.3485 \\ 3.5286 \end{bmatrix}.
\end{align*}
Starting from where EM and mini-batch EM got stuck, we can see that SBE managed to jump over the trap of local minimum and arrived at a better minimum, which seems to be the global minimum; see the right plot of Fig. \ref{fig1}. 

\begin{figure}
\begin{tabular}{cc}
\includegraphics[width=0.45\textwidth]{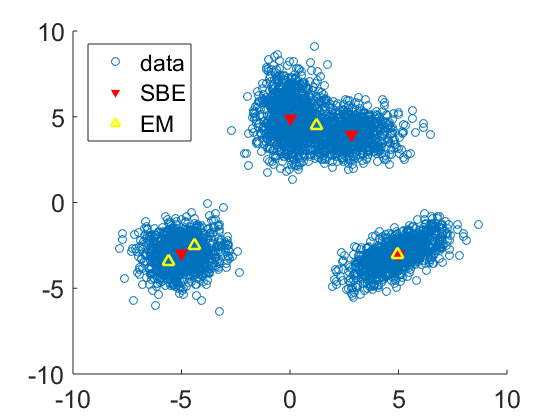}
\includegraphics[width=0.45\textwidth]{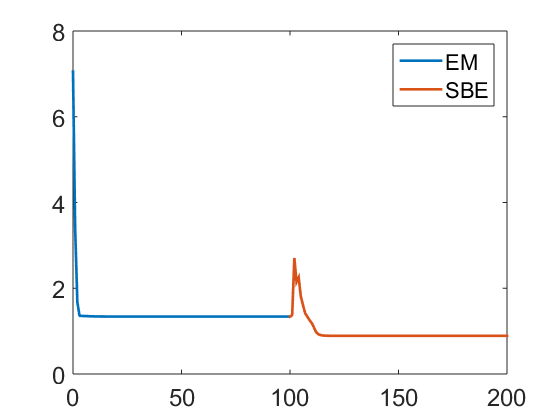}
\end{tabular}
\caption{Synthetic Gaussian data with 4 centroids. Left: Computed centroids by EM and SBE corresponding to the objective values 1.34 and 0.89, respectively. Right: Plot of objective value v.s. number of (outer) iteration. EM converged quickly but got trapped at a local minimum. SBE bypassed this local minimum and reached a better minimum by jumping over a hill.} 
\label{fig1}
\end{figure}

\subsection{Iris dataset}
The Iris dataset, which contains 150 4-D data samples from 3 clusters, was used for comparisons of SBE, EM as well as mini-batch EM algorithms. 100 runs were realized with the initial centroids randomly selected from the data samples. For the parameters, we chose mini-batch size $M=60$, initial step size, \verb"imaxit"$=40$, \verb"omaxit"$=10$, and decay parameter $\beta = \frac{1}{1.01}$. The histograms in Fig.~\ref{fig:iris} record the frequency of objective values given by the three algorithms. Clearly there was $29\%$ chance that EM got stuck at a local minimum whose value is about 0.48, whereas both SBE and mini-batch EM managed to locate an improved minimum valued at around 0.264 \emph{every time}. 

\begin{figure}
\centering
\begin{tabular}{cc}
\includegraphics[width=0.45\textwidth]{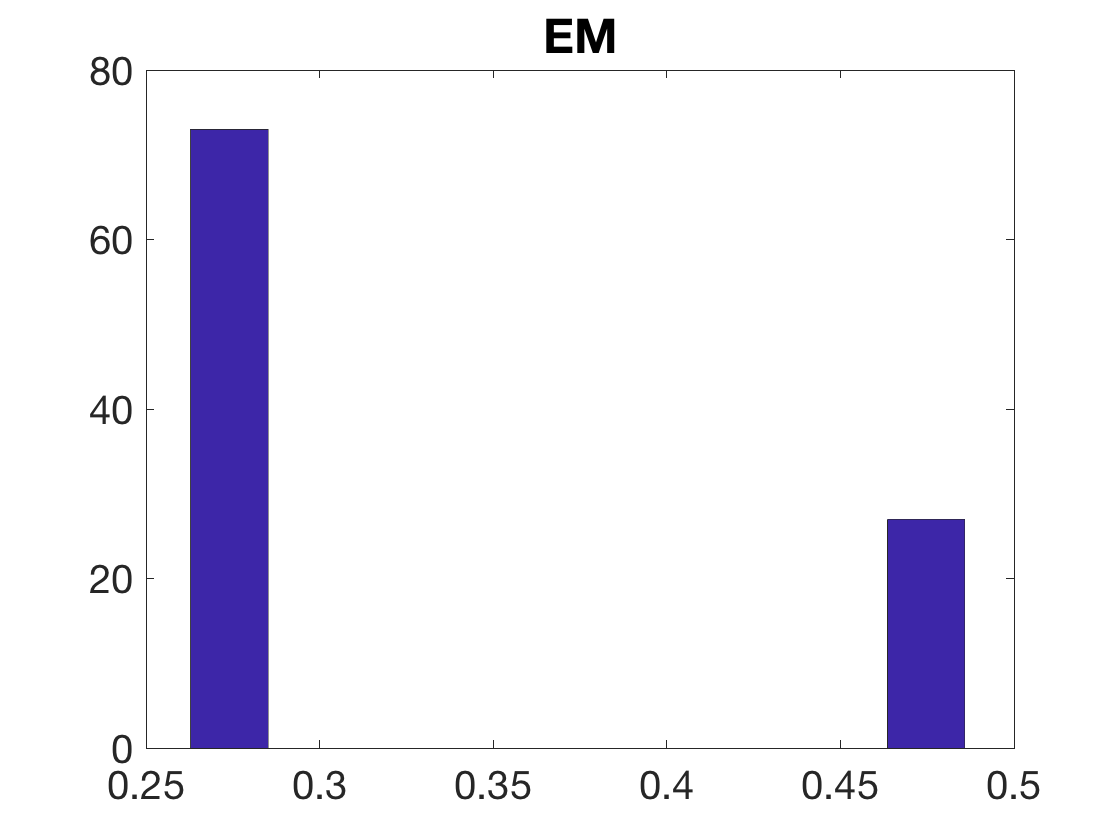} 
\includegraphics[width=0.45\textwidth]{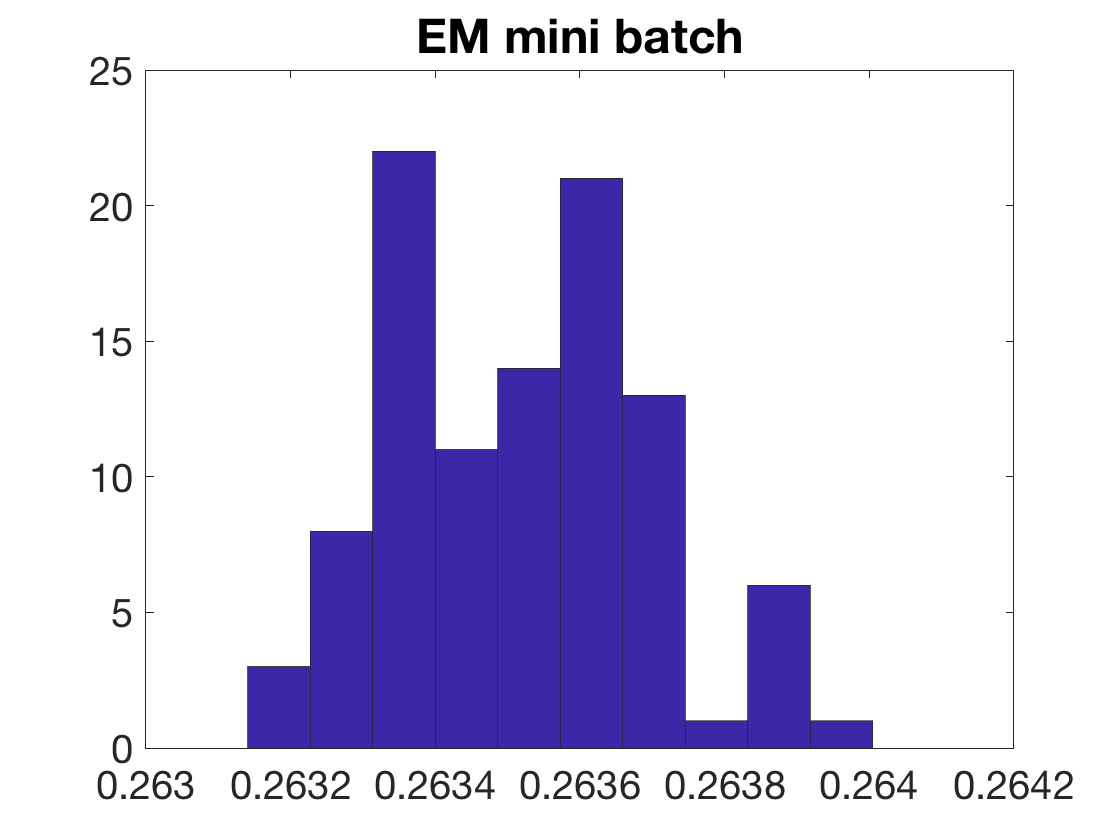} 
\\
\includegraphics[width=0.45\textwidth]{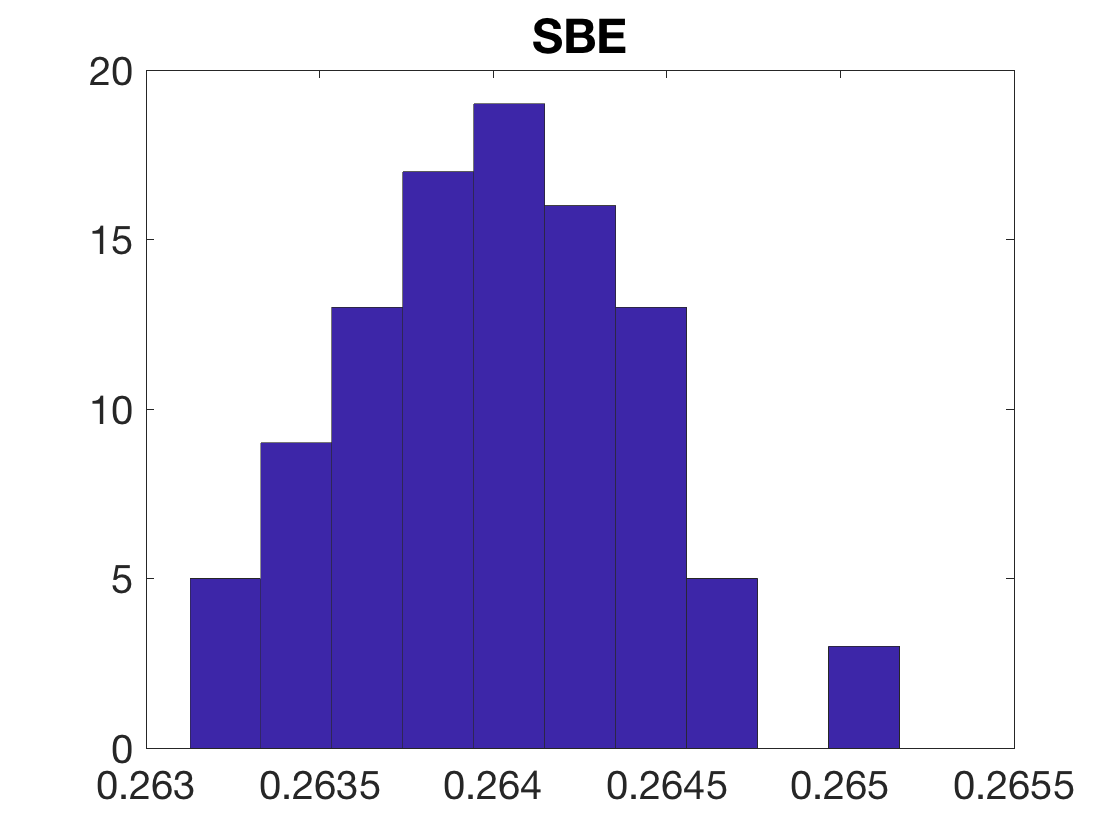} 
\includegraphics[width=0.45\textwidth]{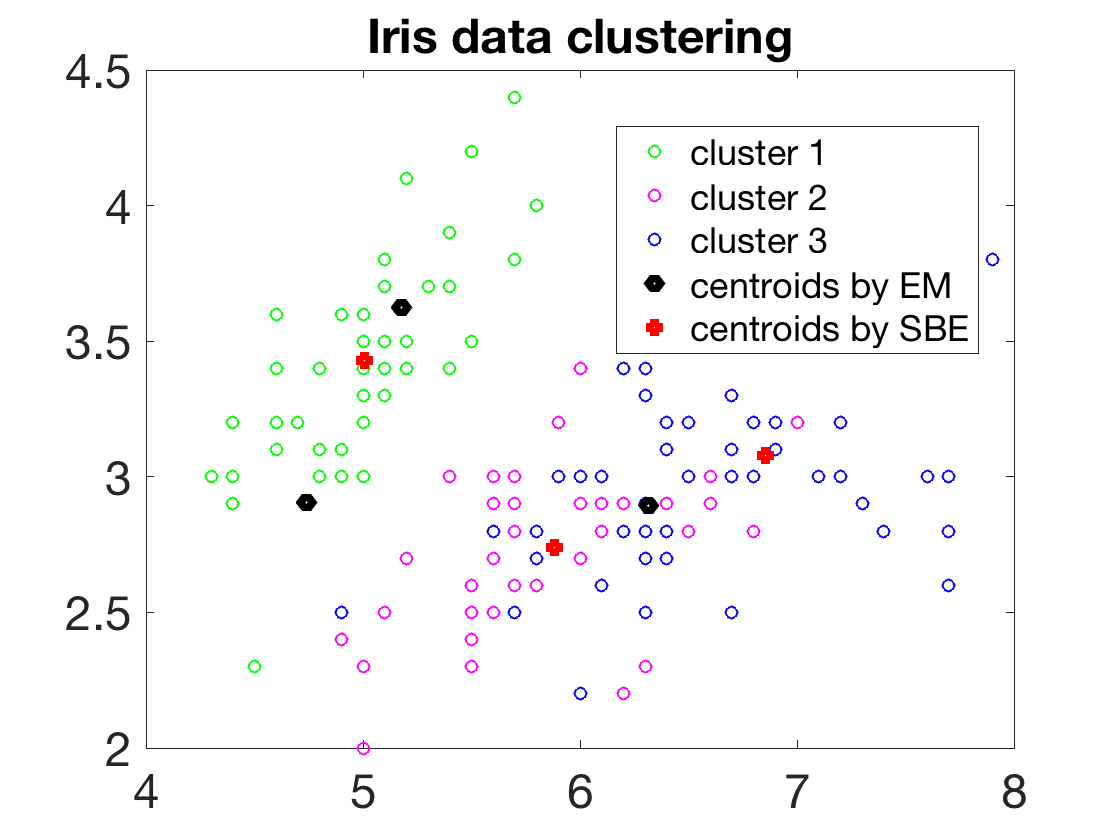}
\end{tabular}
\caption{The Iris dataset with 3 clusters. Top left: histogram of objective values obtained by EM in 100 trials. Top right: histogram of objective values obtained by mini-batch EM in 100 trials. Bottom left: histogram of objective values obtained by SBE (proposed) in 100 trials. Bottom right: computed centroids by EM (black) and SBE (red), corresponding to the objective values 0.48 and 0.264, respectively.}
\label{fig:iris}
\end{figure}

\subsection{Gaussian data with MNIST centroids}
We selected 8 hand-written digit images of dimension $28\times 28 = 784$ from MNIST dataset shown in Fig. \ref{fig2}, and then generated 60,000 images from these 8 centroids by adding Gaussian noise. We compare SBE with both EM and mini-batch EM (mb-EM) \cite{mnbatch,stochastic_kmeans} on 100 independent realizations with random initial guess. For each method, we recorded the minimum, maximum, mean and variance of the 100 objective values by the computed centroids.
\begin{figure}
    \centering
    \includegraphics[scale=0.6]{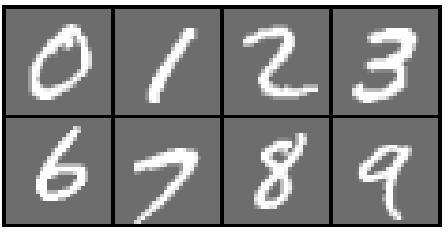}
    \caption{8 selected images from MNIST dataset. 60,000 sample images are generated from these 8 images by adding Gaussian noise.}
    \label{fig2}
\end{figure}

We first compare SBE and EM with the true number of clusters $K=8$. For SBE, mini-batch size $M = 1000$, maximum number of iterations for backward Euler \verb"omaxit"=150, maximum fixed-point iterations \verb"imaxit"= 10 for SBE. We set the maximum number of iterations for EM to be 50, which was sufficient for its convergence. The results are listed in the first two rows of Table~\ref{table1}. We observed SBE always found a minimum around 15.68 up to a tiny error due to the noise from mini-batch. Moreover, note that although we run more iterations (taking the inner loop into account) for SBE than for EM, SBE actually requires less distance evaluations and is computationally cheaper compared with EM because of the small mini-batch. More details will be discussed in section \ref{sec:comp}.

In the comparison between SBE and mb-EM, we reduced mini-batch size to $M=500$, \verb"omaxit"$=100$, \verb"imaxit"$=5$ and tested for $K=6, 8, 10$. Table \ref{table1} shows that with the same mini-batch size, SBE outperforms mb-EM in all three cases, in terms of both mean and variance of the objective values. 
 
\renewcommand{\arraystretch}{1.2} 

\begin{table}
\centering
\begin{tabular}{|c|c|c|c|c|c|c|c|}
\hline
$K$ & Method & Batch size & Max iter & Min &  Max & Mean & Variance  \\ 
\hline
\multirow{2}{*}{8} & EM & 60000 & 50 & 15.6800 & 27.2828 & 20.0203 & 6.0030  \\ 
\cline{2-8}
 & SBE  & 1000 & (150,10) & 15.6808 & 15.6808 & 15.6808 & 1.49$\times 10^{-10}$ \\ 
\hline

\multirow{2}{*}{6}  &  mb-EM & 500 & 100 & 20.44 & 23.4721 & 21.8393 & 0.67 \\ 
\cline{2-8}
& SBE   & 500  & (100,5)  & 20.2989 & 21.2047 & 20.4939 & 0.0439 \\ \hline

\multirow{2}{*}{8}  & mb-EM & 500 & 100 & 15.9193 & 18.5820 & 16.4009 & 0.7646 \\ 
\cline{2-8}
 & SBE   & 500 & (100,5) & 15.6816 & 15.6821 & 15.6820 & 1.18$\times 10^{-9}$ \\ 
\hline

\multirow{2}{*}{10} & mb-EM  & 500 & 100 & 15.9148 & 18.1848 & 16.1727 & 0.4332 \\ 
\cline{2-8}
& SBE  & 500& (100,5) & 15.6823 & 15.6825 & 15.6824 & 1.5$\times 10^{-9}$ \\ 
\hline

\end{tabular}\label{table1}
\caption{Gaussian data generated from MNIST centroids by adding noise. Ground truth $K=8$. Clustering results for 100 independent trails with random initialization.}
\end{table}

\subsection{Raw MNIST data}
In this example, We used the 60,000 images from the MNIST training set for clustering test, with 6000 samples for each digit (cluster) from 0 to 9. The comparison results are shown in Table~\ref{table2}. We conclude that SBE consistently performs better than EM and mb-EM. The histograms of objective value by the three algorithms in the case $K=10$ are plotted in Fig.~\ref{fig3}.

\begin{table}
\centering
\begin{tabular}{|c|c|c|c|c|c|c|c|}
\hline
$K$ & Method & Batch size & Max iter & Min &  Max & Mean & Variance  \\ 
\hline
\multirow{2}{*}{10} & EM & 60000 & 50 & 19.6069 & 19.8195 & 19.6725 & 0.0028  \\ 
\cline{2-8}
 & SBE  & 1000 & (150,10) & 19.6087 & 19.7279 & 19.6201 & 5.7$\times 10^{-4}$ \\ 
\hline
\multirow{2}{*}{8}  & mb-EM & 500 & 100 & 20.4948 & 20.7126 & 20.5958 & 0.0018 \\ 
\cline{2-8}
 & SBE   & 500 & (100,5) & 20.2723 & 20.4104 & 20.3090 & 0.0014 \\ 
\hline

\multirow{2}{*}{10} & mb-EM  & 500 & 100 & 19.9029 & 20.2347 & 20.0146 & 0.0041 \\ 
\cline{2-8}
& SBE  & 500 & (100,5) & 19.6103 & 19.7293 & 19.6354 & 0.0011 \\ 
\hline

\multirow{2}{*}{12}  &  mb-EM & 500 & 100 & 19.3978 & 19.7147 & 19.5136 & 0.0042 \\ 
\cline{2-8}
& SBE   & 500  & (100,5)  & 19.0492 & 19.1582 & 19.0972 & 6.2$\times 10^{-4}$ \\ \hline

\end{tabular}\label{table2}
\caption{Raw MNIST training data. The ground truth number of clusters is $K=10$. Clustering results for 100 independent trails with random initialization.}
\end{table}

\begin{figure}
\begin{tabular}{cc}
\includegraphics[width=0.45\textwidth]{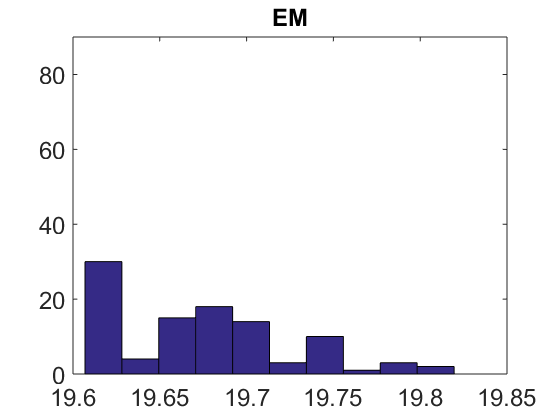}
\includegraphics[width=0.45\textwidth]{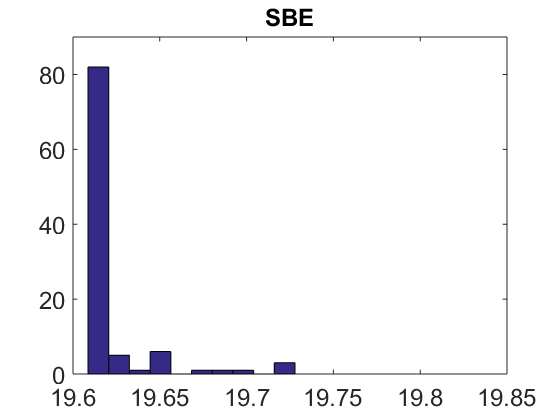}\\
\includegraphics[width=0.45\textwidth]{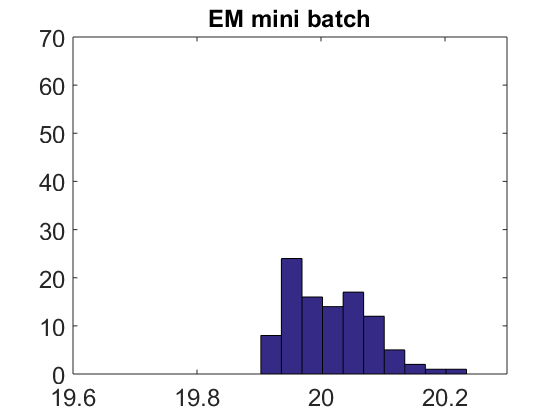}
\includegraphics[width=0.45\textwidth]{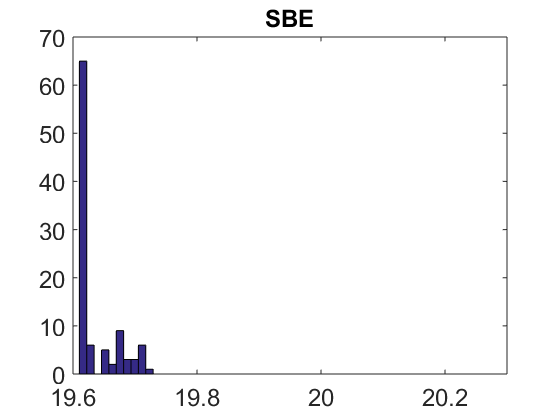}
\end{tabular}
    \caption{Histograms of objective value for MNIST training data with ground truth number of clusters $K=10$. Top left: EM.  Top right: SBE, mini-batch size of 1000. Bottom left: mn-EM, mini-batch size of 500. Bottom right: SBE, mini-batch size of 500. }\label{fig3}
\end{figure}

\subsection{MNSIT features}
We extracted the feature vectors of MNIST training data prior to the last layer of LeNet-5 \cite{mnist_98}. The feature vectors have dimension 64 and lie in a better manifold compared with the raw data. The results are shown in Table~\ref{table3} and Fig.~\ref{fig4} and \ref{fig5}.

\begin{table}
\centering
\begin{tabular}{|c|c|c|c|c|c|c|c|}
\hline
$K$ & Method & Batch size & Max iter& Min &  Max & Mean & Variance  \\ 
\hline
\multirow{2}{*}{10} & EM & 60000 & 50 &1.6238 & 3.0156 & 2.1406 & 0.0977  \\ 
\cline{2-8}
 & SBE  & 1000 & (150,10) & 1.6238 & 1.6239  &1.6239 & 2.7$\times 10^{-10}$ \\ 
\hline
\multirow{2}{*}{8}  & mb EM & 500 & 100 &2.3428 & 3.5972 & 2.7157 & 0.0666 \\ 
\cline{2-8}
 & SBE   & 500  &  (100,5) & 2.2833 & 2.4311 & 2.3274 & 0.0015 \\ 
\hline

\multirow{2}{*}{10} & mb EM  & 500 & 100 & 1.6504 & 2.6676 & 2.1391 & 0.0712 \\ 
\cline{2-8}
& SBE  & 500 & (100,5) & 1.6239 & 1.6242 & 1.6240 & 1.37$\times 10^{-9}$ \\ 
\hline

\multirow{2}{*}{12}  &  mb EM & 500 & 100 & 1.5815 & 2.6189 & 1.7853 & 0.0661 \\ 
\cline{2-8}
& SBE   & 500  & (100,5) & 1.5326 & 1.5891 & 1.5622 & 9.8$\times 10^{-5}$ \\ \hline

\end{tabular}
\caption{MNIST features generated by LeNet-5 network. The ground truth number of clusters is $K=10$. Clustering results for 100 independent trails with random initialization.}\label{table3}
\end{table}

\begin{figure}
\begin{tabular}{cc}
    \includegraphics[width=0.45\textwidth]{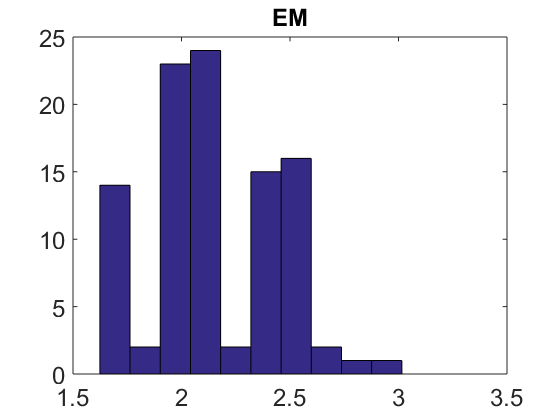}
    \includegraphics[width=0.45\textwidth]{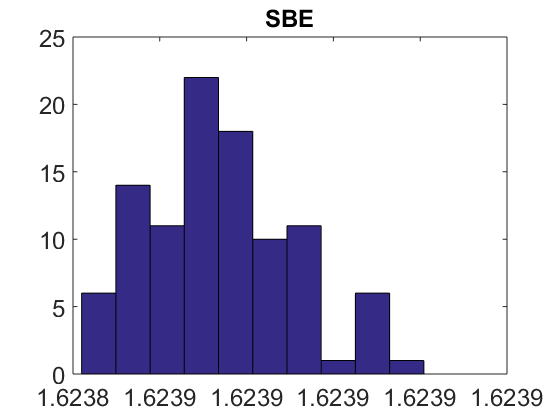}\\
     \includegraphics[width=0.45\textwidth]{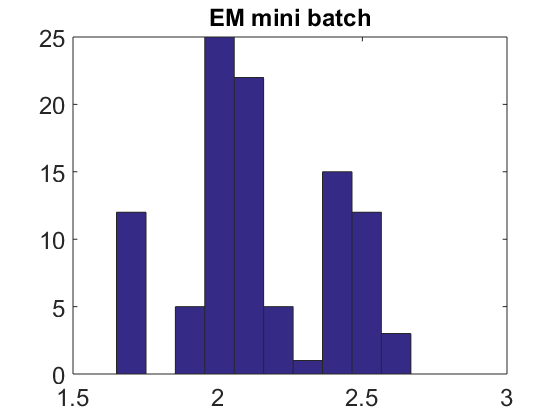}
 \includegraphics[width=0.45\textwidth]{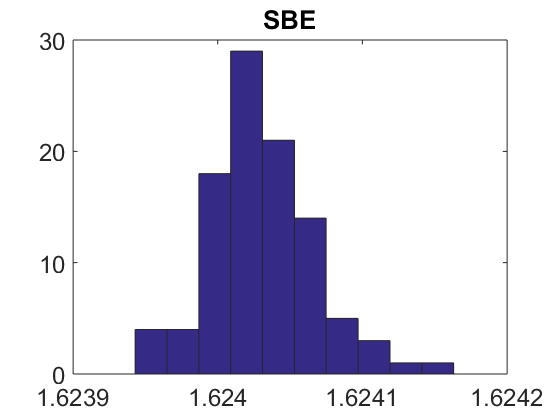}
\end{tabular}
    \caption{Histograms of objective value for MNIST feature data with ground truth number of clusters K=10. Top left: EM.  Top right: SBE, mini-batch size of 1000. Bottom left: mn-EM, mini-batch size of 500. Bottom right: SBE, mini-batch size of 500.}\label{fig4}
\end{figure}

\begin{figure}
    \centering
    \includegraphics[width=0.5\textwidth]{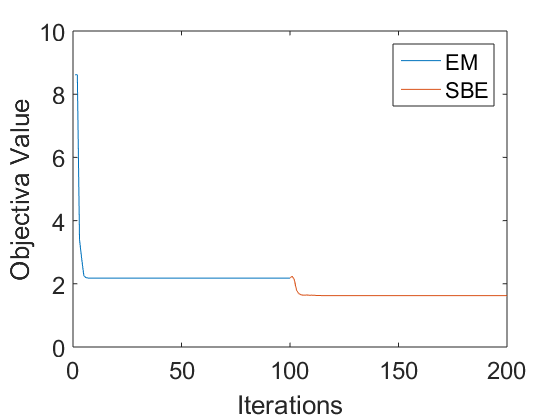}
    \caption{Objective value for MNIST features dataset. The ground truth number of clusters is $K=10$. EM got trapped at local minimum around 2.178. Initializing SBE with this local minimizer, an improved minimum around 1.623 was found.}\label{fig5}
\end{figure}

\subsection{Comparison of time efficiency}\label{sec:comp}
We first compare the per-(outer)iteration costs for EM, mini-batch EM, and SBE, respectively. In every iteration, we need to find the the labels or clusters associated with data points that are used to update the centroids, for which minimum distance between the data points and current centroids are computed. This dominates the total computational cost, especially for big data. In EM, we need to find the labels for all data points when updating the centroids. In contrast, only a small batch of labels are needed in mini-batch EM and SBE. Specifically, for the datasets in sections 3.3 and 3.4 with 60,000 points of dimension 784, the per-iteration computation time of EM was around 2.3 seconds, whereas those of mini-batch EM and SBE (with 5 inner iterations) were 0.03 seconds and 0.1 seconds, respectively. For the raw MNIST data with $K = 10$, EM usually needed around 15 iterations to converge to a good minimum at about 19.6 with random initialization (if succeeded; see Table 2), whereas SBE needed 30 iterations and mini-batch EM always failed to do so. So basically we saw more than 10$\times$ savings in time efficiency of SBE, compared with EM. The tests were carried out on a laptop with  2.8 GHz Intel Core i7 CPU and 16 GB memory.

\section{Discussions}
In this section, we provide an intuitive explanation for why SBE often succeeds to find better local minima than SGD through a simple one-dimensional example in Fig. \ref{fig:sbe}. At the $(k+1)$-th iteration, SBE approximately solves $x = x^k - \gamma \nabla \phi(x)$ due to the noise introduced by mini-batch gradient. Since $\nabla \phi$ is technically only piecewise Lipschitz continuous, the backward Euler may have multiple solutions. For example, in Fig. \ref{fig:sbe}, we get two solutions $x_{\mathrm{BE},1}$ in the leftmost valley and $x_{\mathrm{BE},2}$ in the second from the left. $x^{k+1}$ solved by SBE is close to $x_{\mathrm{BE},2}$ as it gives a lower objective value of $f_\gamma$. Then $x^{k+1}$ bypasses the local minimum, which explains the increase of objective value shown in the right plot of Fig.~\ref{fig1}. We conjecture that averaging of the iterates $\{y^l\}$ enables an aggressive step size $\gamma$ larger than the theoretical upper-bound $1/L$ as in Theorem \ref{thm:fp}, which also helps skip bad local minima. We did observe blowup phenomenon numerically whenever the averaging scheme was not used. It is of our interest to prove an improved upper-bound for $\gamma$ in the future work. Similar to what was done in \cite{artina_13} , another direction is to analyze how exact the inner problems have to be solved in order to still guarantee the convergence of the outer Backward Euler problem.

\begin{figure}
\begin{tabular}{cc}
    \includegraphics[width=0.45\textwidth]{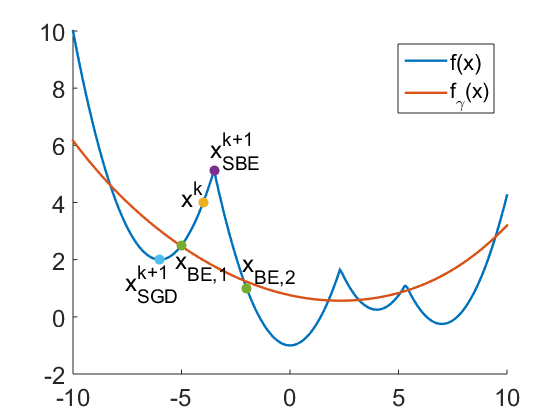}
    \includegraphics[width=0.45\textwidth]{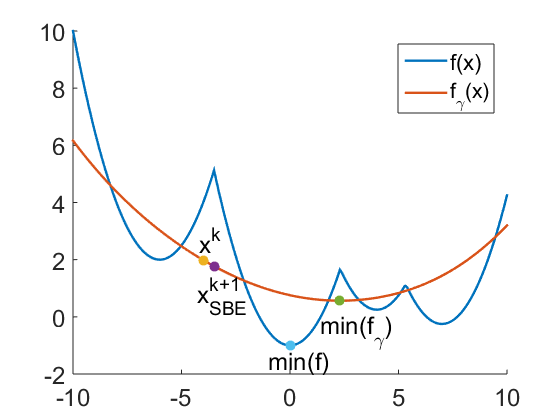}
\end{tabular}
 	\caption{Comparison the updates between SGD and SBE. Left: In the $(k+1)$-th update, SGD gives $\mathbb{E}_{\mathrm{SGD}}[x^{k+1}] = x^k - \gamma \nabla f(x^k)$ while SBE solves gradient descent on $f_\gamma$ and ends up with $x^{k+1}_{\mathrm{SBE}}$. Right: SBE tends to converge to the global minimum of local entropy $f_{\gamma}$. We must let $\gamma \rightarrow 0$ in order for SBE to converge to the true global minimum of $f$.}\label{fig:sbe} 
\end{figure}

%\section{Concluding remarks}
%We proposed an implicit stochastic gradient method, called stochastic backward Euler, for solving $k$-means clustering.

\section*{Acknowledgement}
This work was partially supported by AFOSR grant FA9550-15-1-0073 and ONR grant N00014-16-1-2157. We would like to thank Dr. Bao Wang for helpful discussions. We also thank the anonymous reviewers for their constructive comments.

% Non-BibTeX users please use

\end{document}